\setlist[enumerate]{leftmargin=1.5em}
\setlist[itemize]{leftmargin=1.5em}
\definecolor{green}{rgb}{0,0.8,0} 
\newtheorem{thm}{Theorem}[section]
\newtheorem{lem}[thm]{Lemma}
\newtheorem{prop}[thm]{Proposition}
\theoremstyle{definition}
\theoremstyle{remark}
\newtheorem{rmk}[thm]{Remark}
\numberwithin{equation}{section}
\newcommand{\nrm}[1]{\Vert#1\Vert}
\newcommand{\tld}[1]{\widetilde{#1}}
\newcommand{\br}[1]{\overline{#1}}
\newcommand{\nnrm}[1]{{\vert\kern-0.25ex\vert\kern-0.25ex\vert #1 
		\vert\kern-0.25ex\vert\kern-0.25ex\vert}}
\newcommand{\supp}{{\mathrm{supp}}\,}
\newcommand{\rd}{\partial}
\newcommand{\nb}{\nabla}
\newcommand{\ift}{\infty}
\newcommand{\alp}{\alpha}
\newcommand{\bt}{\beta}
\newcommand{\dlt}{\delta}
\newcommand{\veps}{\varepsilon}
\newcommand{\lmb}{\lambda}
\newcommand{\tht}{\theta}
\newcommand{\omg}{\omega}
\newcommand{\bbR}{\mathbb R}
\newcommand{\To}{\longrightarrow}
\begin{document}

\bibliographystyle{plain}
\title{Global regularity for some axisymmetric Euler flows in $\mathbb{R}^{d}$}
\author{Kyudong Choi\thanks{Department of Mathematical Sciences, Ulsan National Institute of Science and Technology, 50 UNIST-gil, Eonyang-eup, Ulju-gun, Ulsan 44919, Republic of Korea. Email: kchoi@unist.ac.kr}\and In-Jee Jeong\thanks{Department of Mathematical Sciences and RIM, Seoul National University, 1 Gwanak-ro, Gwanak-gu, Seoul 08826, Republic of Korea. Email: injee$ \_ $j@snu.ac.kr}\and 
	Deokwoo Lim\thanks{Department of Mathematical Sciences, Ulsan National Institute of Science and Technology, 50 UNIST-gil, Eonyang-eup, Ulju-gun, Ulsan 44919, Republic of Korea. Email: dwlim@unist.ac.kr} }

\date\today
\maketitle

\begin{abstract}
	We consider axisymmetric Euler flows without swirl in $\mathbb{R}^{d}$ with $d\ge4$, for which the global regularity of smooth solutions is an open problem. When $d = 4$, we obtain global regularity under the assumption that the initial vorticity satisfies some  decay  at infinity and is vanishing at the axis. Assuming further that the initial vorticity is of one sign guarantees global regularity for $d\le 7$. 
\end{abstract}

\renewcommand{\thefootnote}{\fnsymbol{footnote}}
\footnotetext{\emph{2020 AMS Mathematics Subject Classification:} 76B47, 35Q35 }
\footnotetext{\emph{Key words:} High dimensional Euler; axisymmetric flow; vorticity growth; global regularity.}
\renewcommand{\thefootnote}{\arabic{footnote}}

\section{Introduction}

\subsection{Axisymmetric Euler equations without swirl}

In this paper, we are concerned with the question of global regularity for incompressible Euler flows in $\bbR^{d}$ ($d\ge3$) which are \textit{axisymmetric without swirl}. Under this assumption, the velocity of the incompressible Euler equations  \begin{equation}\label{eq_Euler}
	\left\{
	\begin{aligned}
		\rd_{t}u+(u\cdot\nb)u+\nb p&=0,\\
		\nb\cdot u&=0
	\end{aligned}
	\right.
\end{equation} takes the form $u=u^{r}(r,z)e^{r}+u^{z}(r,z)e^{z}$, where $(r,\tht_{1},\tht_{2},\cdots,\tht_{d-3},\phi,z)$ is the cylindrical coordinate system in $\bbR^{d}$. Introducing the scalar vorticity $ \omg:=\rd_{z}u^{r}-\rd_{r}u^{z} $ reduces \eqref{eq_Euler} to 
\begin{align}
	\rd_{t}\omg+u\cdot\nb\omg&=\frac{(d-2)u^{r}}{r}\omg,\label{eq_vortformNd}\\
\end{align} with \begin{equation}\label{eq_urpsi}
\begin{split}
	u^{r}&=-\frac{1}{r^{d-2}}\rd_{z}\psi, \qquad u^{z}=\frac{1}{r^{d-2}}\rd_{r}\psi,
\end{split}
\end{equation}
where $u\cdot\nb = u^r\rd_r + u^z\rd_z$ and $ \psi $ is the scalar stream function introduced below. Alternatively, using the
\textit{relative} vorticity $ \xi:=r^{-(d-2)}\omg $, \eqref{eq_vortformNd} becomes\begin{equation}\label{eq_transport}
	\rd_{t}\xi+u\cdot\nb\xi=0.
\end{equation}
Since $u$ is divergence free, this shows that the $ L^{p}(\bbR^d) $-norms of $ \xi $ are conserved for any $ p\in[1,\ift] $.

\subsection{Main results}

As we shall discuss in more detail below, it is not at all clear whether smooth solutions to the axisymmetric without swirl equation can blow up in finite time or not, for $d\ge4$. We give two conditions which guarantee global regularity. In the following statements, we assume that the corresponding initial velocity $u_0$ belongs to $H^s(\bbR^d)$ for some $s>1+d/2$, so that there exists a local in time unique $H^s$ solution (\cite{Ka,KL}, also see the recent work \cite{Miller}). This in particular guarantees that $\omg_0\in L^\infty(\bbR^d)$.
\begin{thm}\label{thm:1} 
	Consider the case $d = 4$. Assume that the scalar vorticity $\omg_0$ satisfies $r^{-2}\omg_0 \in (L^1\cap L^\infty)(\bbR^4)$ and $r^2\omg_0 \in L^1(\bbR^4)$. Then, the corresponding smooth solution is global in time and satisfies \begin{equation*}
		\begin{split}
						\|\omg(t)\|_{L^{\ift}(\bbR^{4})} \leq C_{1}e^{C_{0}t},
		\end{split}
	\end{equation*} for some $C_0,C_1>0$ depending on the initial data. 
\end{thm}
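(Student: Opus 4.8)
My plan is to reduce the theorem to an a priori bound on $\|u^r/r\|_{L^\infty}$ and to extract that bound from a Biot–Savart representation in a shifted dimension, using the two moment hypotheses. Since a local $H^s$ solution exists, by a Beale–Kato–Majda-type continuation criterion it suffices to prove the a priori estimate $\|\omega(t)\|_{L^\infty}\le C_1 e^{C_0 t}$. The transport equation for $\xi=r^{-2}\omega$ gives $\|\xi(t)\|_{L^p}=\|\xi_0\|_{L^p}$ for every $p\in[1,\infty]$, so $\|\xi\|_{L^1\cap L^\infty}$ is conserved, and the hypotheses say precisely that $\xi_0=r^{-2}\omega_0\in L^1\cap L^\infty$ while $r^2\omega_0=r^4\xi_0\in L^1$. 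The stretching relation $\tfrac{D\omega}{Dt}=\tfrac{2u^r}{r}\omega$ integrates to $\|\omega(t)\|_{L^\infty}\le \|\omega_0\|_{L^\infty}\exp\!\big(2\int_0^t\|(u^r/r)(s)\|_{L^\infty}\,ds\big)$, so everything comes down to showing $\int_0^t\|u^r/r\|_{L^\infty}\,ds\lesssim t$, i.e. a (nearly) uniform bound on $\|u^r/r\|_{L^\infty}$.

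The key algebraic step is a dimension shift. The stream function solves $-\partial_r^2\psi-\partial_z^2\psi+\tfrac{d-2}{r}\partial_r\psi=r^{d-2}\omega$, and the substitution $\psi=r^{d-1}v$ converts this into the clean Poisson equation
\[
-\Delta_{d+2}v=\frac{\omega}{r},\qquad \Delta_{d+2}=\partial_r^2+\frac{d}{r}\partial_r+\partial_z^2,
\]
where $\Delta_{d+2}$ is the Laplacian on functions on $\mathbb{R}^{d+2}$ that are radial in the first $d+1$ variables, and moreover $u^r=-r\,\partial_z v$, so that $u^r/r=-\partial_z v$. For $d=4$ this reads $-\Delta_6 v=g$ with $g:=\omega/r=r\xi$ (well defined and vanishing at the axis because $\xi\in L^\infty$), and $u^r/r=-\partial_z v$. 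Hence $u^r/r$ is a first derivative of the $\mathbb{R}^6$ Newtonian potential of $g$, a singular integral with kernel of size $|X-Y|^{-5}$.

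A near/far decomposition of this integral and optimization of the cutoff radius gives, crudely, $\|u^r/r\|_{L^\infty}\lesssim \|g\|_{L^1(\mathbb{R}^6)}^{1/6}\|g\|_{L^\infty(\mathbb{R}^6)}^{5/6}$. Here $\|g\|_{L^1(\mathbb{R}^6)}\simeq\int|\omega|\,r^3\,dr\,dz$, which by Hölder interpolates between $\|\xi\|_{L^1(\mathbb{R}^4)}\simeq\int|\omega|\,dr\,dz$ and $\|r^2\omega\|_{L^1(\mathbb{R}^4)}\simeq\int|\omega|\,r^4\,dr\,dz$, namely $\|g\|_{L^1(\mathbb{R}^6)}\lesssim \|\xi\|_{L^1(\mathbb{R}^4)}^{1/4}\|r^2\omega\|_{L^1(\mathbb{R}^4)}^{3/4}$. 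This is exactly where both hypotheses enter: the first factor is conserved, and the second is the role of $r^2\omega_0\in L^1$. One then propagates $\|r^2\omega(t)\|_{L^1}$ and $\|g\|_{L^\infty}=\|r\xi\|_{L^\infty}$ along the flow using $\tfrac{Dr}{Dt}=u^r$, obtaining differential inequalities driven by $\|u^r/r\|_{L^\infty}$.

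The main obstacle is closing this feedback loop. The naive Newtonian bound above is scale-critical: for \emph{any} admissible pair of Lebesgue exponents the induced inequality for $A(t):=\int_0^t\|u^r/r\|_{L^\infty}$ takes the self-defeating form $\tfrac{dA}{dt}\lesssim e^{cA}$ with $c>0$, which does not produce a global bound. The crux is therefore to upgrade this to a scale-subcritical estimate by exploiting the anisotropy of the reduced (angularly averaged) kernel in the $(r,z)$ half-plane: the contribution from $r_Y$ far from $r_X$ should be absorbed into the weighted moment $\|r^2\omega\|_{L^1}$, while the local part is controlled by the \emph{conserved} $\|\xi\|_{L^\infty}$ rather than by the spreading quantity $\|r\xi\|_{L^\infty}$. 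Equivalently, one must control the radial spreading of the support of $\xi$, since $\|\omega(t)\|_{L^\infty}\le \|\xi_0\|_{L^\infty}\,\mathrm{ess\,sup}_{\,\mathrm{supp}\,\xi(t)}r^2$. Establishing such a sharp bound on $u^r/r$ is the heart of the matter, and it is where the restriction to low dimension, here $d=4$, becomes essential.
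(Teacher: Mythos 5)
Your setup is sound (the dimension-shift identity $-\Delta_{6}v=\omega/r$, $u^r/r=-\partial_z v$ checks out, as does the potential estimate), but the proposal has a genuine gap, and you name it yourself: the ``heart of the matter'' --- a bound on $\|u^r/r\|_{L^\infty}$ strong enough to close the Gr\"onwall loop --- is never established. This is not a technical omission that routine work would fill. The route you chose, controlling the critical stretching factor $u^r/r$ as Danchin does in 3D, is exactly what your own scaling analysis shows to be self-defeating in 4D: every Lebesgue-norm bound you can write leads to $\dot A\lesssim e^{cA}$, i.e.\ finite-time breakdown of the estimate. Your closing remark about controlling the spread of $\mathrm{supp}\,\xi$ is the germ of the right idea, but it is not carried out, and as stated it would require compactly supported data, which Theorem \ref{thm:1} does not assume.

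The paper escapes precisely by never estimating $u^r/r$ at all. It proves the Feng--\v{S}ver\'ak-type interpolation bound of Proposition \ref{prop_urest}, $\|u^{r}\|_{L^{\infty}(\bbR^{4})}\lesssim\|r^{2}\omg\|_{L^{1}}^{1/4}\|\omg/r^{2}\|_{L^{1}}^{1/4}\|\omg/r^{2}\|_{L^{\infty}}^{1/2}$, for the radial velocity itself, and then replaces the exponential stretching factor $\exp\bigl(2\int_0^t\|u^r/r\|_{L^\infty}\bigr)$ by an algebraic one via a Lagrangian argument (Lemma \ref{lem_omgLiftest}): since $\xi=\omg/r^2$ is transported, $|\omg(t,\Phi_t(y))|=\bigl(\Phi_t^r(y)/r_y\bigr)^2 r_y^2|\xi_0(y)|$, and with $R(t):=1+\int_0^t\|u^r(s)\|_{L^\infty}ds$ one splits according to whether the endpoint satisfies $r_x\le R(t)$ (then $|\omg|\le R(t)^2\|\xi_0\|_{L^\infty}$) or $r_x>R(t)$ (then necessarily $r_y>1$, so the ratio $r_x/r_y\le R(t)$ and $|\omg|\le R(t)^2\|\omg_0\|_{L^\infty}$); this splitting also handles non-compactly-supported data. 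The same argument gives $\|r^2\omg(t)\|_{L^1}\lesssim R(t)^4$. Feeding this into the velocity bound --- where $\|r^2\omg\|_{L^1}$ enters only with power $1/4$, while $\|\omg/r^2\|_{L^1}$ and $\|\omg/r^2\|_{L^\infty}$ are conserved --- yields the \emph{linear} ODE $\dot R\lesssim R$, hence $R(t)\le e^{C_0t}$ and $\|\omg(t)\|_{L^\infty}\lesssim R(t)^2\le e^{2C_0t}$, and BKM concludes. The insight you were missing is that quadratic (not exponential) dependence of $\|\omg\|_{L^\infty}$ on the radial displacement, combined with the quarter-power of the growing moment in the velocity estimate, is what makes the feedback loop close; once this is seen, $\|u^r\|_{L^\infty}$ suffices and the critical quantity $u^r/r$ never appears.
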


\begin{thm}\label{thm:2}
	Consider $d\le 7$, and assume that $\omg_0$ is compactly supported in $\bbR^{d}$, satisfies $r^{-(d-2)}\omg_0 \in L^\infty(\bbR^{d})$, and that $\omg_0$ is either non-negative or non-positive everywhere on $\bbR^{d}$. Then, the corresponding solution is global in time and satisfies
	\begin{equation*}
		\begin{split}
			\|\omg(t)\|_{L^{\ift}(\bbR^{d})}&\leq\begin{cases}
				C_{2}(1+t)^{\frac{4(d-2)}{7-d}},\quad &d=4,5,6,\\
				C_{3}e^{C_{4}t},\quad &d=7,
			\end{cases}
		\end{split}
	\end{equation*} for some $ C_{2}=C_{2}(d), C_{3}, C_{4}>0 $ depending on the initial data.
\end{thm}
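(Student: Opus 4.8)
The plan is to reduce the $L^\ift$ bound on $\omg$ to a bound on the growth of the support radius, and then to close that bound using the one-sign hypothesis through two conserved quantities: the hydrodynamic impulse and the kinetic energy.

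First I would extract the conservation laws that the one-sign assumption turns into usable sign-definite quantities. Since $\xi=r^{-(d-2)}\omg$ solves the transport equation \eqref{eq_transport}, every $L^p(\bbR^d)$ norm of $\xi$ is conserved, the pointwise sign of $\xi$ (hence of $\omg$) is preserved, and $\|\xi(t)\|_{L^\ift}=\|\xi_0\|_{L^\ift}=:A$. Next, integrating the vorticity equation against $r^{d-1}$ and using the divergence-free relation $\rd_r u^r+\tfrac{d-2}{r}u^r+\rd_z u^z=0$, one finds $\tfrac{d}{dt}\int\omg\,r^a\,dr\,dz=a\int u^r r^{a-1}\omg\,dr\,dz$, and a further integration by parts shows this vanishes for $a=d-1$; thus the impulse $I:=\int\omg\,r^{d-1}\,dr\,dz$ is conserved, and with $\omg\ge0$ it is a genuine nonnegative weighted mass. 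The kinetic energy $E=\tfrac12\int|u|^2\,dx$ is conserved as well. The key elementary reduction is then $\|\omg(t)\|_{L^\ift}=\|r^{d-2}\xi\|_{L^\ift}\le A\,R(t)^{d-2}$, where $R(t):=\sup\{r:(r,z)\in\supp\omg(t)\}$, so it suffices to control $R(t)$; the stated exponents $\tfrac{4(d-2)}{7-d}$ correspond exactly to $R(t)\aleq(1+t)^{4/(7-d)}$.

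Second I would set up a clean Biot--Savart representation by a dimension-raising change of variables. Writing $\psi=r^{d-1}f$ conjugates the stream-function operator $\rd_r^2+\rd_z^2-\tfrac{d-2}{r}\rd_r$ into $r^{d-1}\lap_{d+2}$, where $\lap_{d+2}$ is the Laplacian in $\bbR^{d+2}$ acting on axisymmetric functions. Hence $-\lap_{d+2}f=\omg/r$, and the radial velocity collapses to the remarkably simple form $u^r=-r\,\rd_z f$, so that the stretching coefficient is $\tfrac{(d-2)u^r}{r}=-(d-2)\rd_z f$. Under this correspondence the conserved quantities become geometric data for the Newtonian potential $f$: the impulse is the total source mass, $\|\omg/r\|_{L^1(\bbR^{d+2})}\aeq I$, while a standard stream-function identity gives the energy as the Dirichlet energy, $\|\nb f\|_{L^2(\bbR^{d+2})}^2\aeq E$. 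Since the outermost fluid particle obeys $\dot R\le\sup\{|u^r|:r=R\}\le R\,\|\rd_z f\|_{L^\ift}$, the whole problem reduces to a pointwise estimate of $\rd_z f$ near the edge of the support.

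Third, the crux is this velocity estimate, for which the target is $\dot R\aleq R^{(d-3)/4}$; integrating yields the polynomial rates for $d=4,5,6$, and, since the exponent equals $1$ precisely at $d=7$, the exponential rate in the borderline case---indeed $\tfrac{d-3}{4}\le1\iff d\le7$, which is exactly the dimensional restriction in the statement. I would bound $\rd_z f$ by splitting the potential $\int|X-Y|^{-(d+1)}(\omg/r)(Y)\,dY$ over $\bbR^{d+2}$ at a radius $\rho$, estimating the far piece through $E=\|\nb f\|_{L^2}^2$, the near piece through the local source size, and feeding in the conserved mass $I$, then optimizing in $\rho$. The main obstacle is that a naive interpolation controls the near piece only through $\|\omg/r\|_{L^\ift}=\|r^{d-3}\xi\|_{L^\ift}\le A\,R^{d-3}$; this factor \emph{grows} in $R$ and combines with the prefactor $r=R$ in $u^r=-r\rd_z f$ to give a power of $R$ far exceeding $(d-3)/4$, useless for global existence. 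The heart of the argument is therefore to prevent this loss: one must show that $\|\omg/r\|_{L^\ift}$ does not actually grow like $R^{d-3}$, by controlling its own transport equation $\tfrac{D}{Dt}(\omg/r)=-(d-3)\rd_z f\,(\omg/r)$ simultaneously with $R(t)$, using $E$ and $I$ so that the two estimates bootstrap and all constants close. Making this coupled estimate quantitatively sharp, and pinning $d=7$ as the exact threshold, is where the real work lies; the final integration of the differential inequality for $R(t)$ is then routine and produces the displayed bounds.
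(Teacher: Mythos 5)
Your outer skeleton coincides with the paper's: reduce $\|\omg(t)\|_{L^\ift}$ to the support radius through the conserved $\|\xi\|_{L^\ift}$, use one-signedness so that the conserved impulse becomes the norm $\|r\omg\|_{L^1}$, and aim for the differential inequality $\dot R\aleq R^{(d-3)/4}$, whose integration gives the stated rates and pins $d=7$ as the borderline. The genuine gap is that you never establish the velocity estimate that produces this exponent --- and you say so yourself. The paper gets it from the unconditional Feng--Sverak type estimate of Proposition \ref{prop_urest},
\[
\|u^{r}\|_{L^{\ift}(\bbR^{d})}\lesssim\|r^{d-2}\omg\|_{L^{1}(\bbR^{d})}^{\frac{1}{4}}\,\Big\|\tfrac{\omg}{r^{d-2}}\Big\|_{L^{1}(\bbR^{d})}^{\frac{1}{4}}\,\Big\|\tfrac{\omg}{r^{d-2}}\Big\|_{L^{\ift}(\bbR^{d})}^{\frac{1}{2}},
\]
proved by a scaling normalization to the observation point $(r,z)=(1,0)$, the kernel bounds of Lemma \ref{lem_Fd'est}, and the interpolation Lemma \ref{lem_kernelf} on the near-diagonal region. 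Its decisive structural feature is that the two factors carrying the dangerous weight $r^{-(d-2)}$ are exactly conserved norms of $\xi$, while the only non-conserved factor obeys $\|r^{d-2}\omg\|_{L^1}\aleq S(t)^{d-3}\|r\omg\|_{L^1}$ and enters with power $\tfrac14$; this is the sole source of the exponent $(d-3)/4$. Your substitute --- splitting the Newtonian potential of $\omg/r$ in $\bbR^{d+2}$, far field by energy, near field by interpolation --- places the \emph{non-conserved} quantity $\|\omg/r\|_{L^\ift}\le\|\xi_0\|_{L^\ift}R^{d-3}$ in the near-field bound, where, by your own accounting, it yields a power of $R$ far exceeding $(d-3)/4$.

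The rescue you then invoke is not a real mechanism. Since $\xi$ is constant along particle trajectories, the transport equation for $\omg/r=r^{d-3}\xi$ contains no information beyond the bound $\|\omg/r\|_{L^\ift}\le\|\xi_0\|_{L^\ift}R(t)^{d-3}$ you already used: the size of $\omg/r$ is determined precisely by how far in $r$ the particles have traveled, i.e.\ by the very quantity $R(t)$ you are trying to control. The proposed coupled bootstrap is therefore circular --- inserting the target growth of $R$ into your near-field estimate returns a strictly worse power of $R$, so the bootstrap degrades rather than closes. (Whether an energy/impulse confinement argument in the spirit of \cite{ISG99} can be pushed through in this setting is exactly the possible improvement the paper's discussion raises; your sketch does not supply it.) The missing idea is the one that makes Proposition \ref{prop_urest} work: after rescaling so that the observation point is $(1,0)$, the weight $r^{d-2}$ is comparable to $1$ on the near-diagonal region $I_1$, so the near field is controlled by conserved norms of $\xi$ alone, and the support radius enters only through $\|r^{d-2}\omg\|_{L^1}^{1/4}$. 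Without this (or an equivalent estimate), the proof does not go through.
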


\begin{rmk}
	In both statements, smooth and compactly supported initial data do not satisfy the assumption $r^{-(d-2)}\omg_0\in L^\infty$ in general, since $d\ge4$. However, they are satisfied for data which are supported away from the symmetry axis. 
\end{rmk}

\subsection{Discussion}

\textbf{BKM criterion in general dimension and regularity in 3D}. It is well-known that smooth solutions to axisymmetric flows without swirl are global when $d = 3$ (\cite{MB}). To explain this, let us recall the Beale--Kato--Majda criterion (\cite{BKM} for $d = 3$ and \cite{KaPo} for general dimensions, also see the recent work \cite{Miller}): in terms of the velocity, $H^{s}(\bbR^d)$ solutions ($s>d/2+1$) to the Euler equations can become singular at time $T^*$ if and only if \begin{equation}\label{eq:BKM}
	\begin{split}
		\int_0^{T^*} \nrm{ \nb \times u(t,\cdot)}_{L^\infty(\mathbb{R}^d)} dt = +\infty. 
	\end{split}
\end{equation} For axisymmetric flows without swirl, the quantity $\nrm{ \nb \times u(t,\cdot)}_{L^\infty(\mathbb{R}^d)} $ is simply equivalent with $\nrm{ \omg(t,\cdot)}_{L^\infty(\mathbb{R}^d)}$, where $\omg$ is the scalar vorticity introduced above. In turn, using the vorticity equation \eqref{eq_vortformNd}, we see that \begin{equation*}
\begin{split}
	\frac{d}{dt} \nrm{ \omg(t,\cdot)}_{L^\infty(\bbR^{d})} \le (d-2)\nrm{ r^{-1}u^r(t,\cdot)}_{L^\infty(\bbR^{d})} \nrm{ \omg(t,\cdot)}_{L^\infty(\bbR^{d})}, 
\end{split}
\end{equation*} which means that for axisymmetric without swirl flows, $\nrm{r^{-1}u^r}_{L^\infty}$ controls singularity formation. Now, when $d = 3$, one may use the following Danchin's estimate in \cite{Danaxi} (see \cite{Danaxi,Raymond} and the references therein for previous works) \begin{equation*} 
\begin{split}
	\nrm{ r^{-1}u^r(t,\cdot)}_{L^\infty(\bbR^3)} \le C \nrm{ r^{-1}\omg(t,\cdot)}_{L^{3,1}(\bbR^3)} = C \nrm{ r^{-1}\omg_0}_{L^{3,1}(\bbR^3)} ,
\end{split}
\end{equation*} where $L^{p,q}$ denotes the Lorenz space. It is important that the assumption $u_0 \in H^s(\bbR^{3})$ with $s>5/2$ ensures that the right hand side is finite, thereby concluding global regularity. We note that it is still possible to have \textit{infinite} time blow-up of $\nrm{\omg(t,\cdot)}_{L^\infty}$ (and related quantities), see \cite{CJ-axi,ChilGil1,ChilGil2,DE,Tam-axi} for instance. 

\medskip

\noindent \textbf{Singularity formation in 3D}. Rather strikingly, Elgindi has recently shown in \cite{Elgindi-3D} that $C^{1,\alp}(\bbR^3)$ velocity solutions to axisymmetric without swirl flows can develop finite time singularity, for $0<\alp$ sufficiently small.\footnote{Danchin's estimate gives global regularity only for $\alp>1/3.$} This clearly demonstrates that the condition $r^{-1}\omg_0\in L^{3,1}$ is not just a technical requirement. When there is nonzero swirl, a recent preprint of Chen--Hou \cite{ChenHou2} gives singularity formation for smooth data in a 3D cylinder. For previous blow-up results for 3D Euler, see \cite{ChenHou,EJE,EJO}. 

\medskip

\noindent \textbf{Singularity formation in higher dimensions}. While it is expected that Elgindi's work \cite{Elgindi-3D} can be extended to higher dimensional axisymmetric flows, many authors have noted the possibility of singularity formation for \textit{smooth} solutions without swirl (\cite{DE,KhYa,Yang,Miller}); this is the content of \textbf{Open Question 7} in the nice review paper of Drivas--Elgindi \cite{DE}. Khesin--Yang considers certain high-dimensional analogues of vortex filaments in $\bbR^d$ and shows the possibility of finite time singularity precisely when $d\ge5$, although rigorously deriving their evolution equations seems to be a very challenging task. This is formally consistent with our results, since their vorticity satisfies an odd symmetry. The authors of \cite{DE} give examples of singularity formation in ``infinite'' dimensional Euler equations, suggesting the possibility of perturbing it to get blow up in sufficiently large dimensions. See also the recent paper of Miller \cite{Miller}, who obtains a singularity formation for certain ``infinite dimensional'' axisymmetric model equation. Our results show that, at least when the dimension is relatively small, some assumptions on the vorticity can be given to ensure global regularity.

\medskip
 
\noindent \textbf{Local vs. global in 4D}.  
While global regularity of axisymmetric solutions is known in 3D  (e.g. see \cite{MB}), the same problem in 4D is open. Miller in \cite[(1.22)]{Miller} obtained the following estimate:
For $S(t):=\sup_{\tau \in [0,t]} \sup_{r>0}\{(r,z)\in \supp\, \omega(\tau)\}$
$$
\dot S(t)\leq C_{\omega_0} S(t)^2,
$$ which gives only local existence. We improve the estimate into (see Proposition \ref{prop_Rtgrowth})
$$
\dot S(t)\leq C_{\omega_0} S(t),
$$ using the following extension of Feng--Sverak \cite{FeSv} type estimate in $\bbR^3$ \eqref{eq_urest}:
$$		\|u^{r}\|_{L^{\ift}(\bbR^{4})}\lesssim\|r^{2}\omg\|_{L^{1}(\bbR^{4})}^{\frac{1}{4}}\bigg\|\frac{\omg}{r^{2}}\bigg\|_{L^{1}(\bbR^{4})}^{\frac{1}{4}}\bigg\|\frac{\omg}{r^{2}}\bigg\|_{L^{\ift}(\bbR^{4})}^{\frac{1}{2}}.$$ This shows that the \textit{only way} for smooth data to blow up in 4D is to have $\omg_0(r) \sim r$ near the axis. 

\medskip
 
\noindent \textbf{One-signed vortex dynamics: growth of the support}. When the vorticity is single signed, one can improve the support growth estimate, thanks to the conservation of angular impulse, which is \begin{equation*}
	\begin{split}
		\int_{\bbR^d} r\omg dx \simeq_d \int_{-\infty}^{\infty} \int_0^\infty r^{d-1}\omg(r,z)drdz. 
	\end{split}
\end{equation*} While this is conserved in general, it is equivalent with the norm $\nrm{r\omg}_{L^1(\bbR^d)}$ only when $\omg$ is single signed, which can be applied to improve the Feng--Sverak type estimate. 
This is why we can obtain global regularity up to $d\le 7$ in Theorem \ref{thm:2}. One may try to improve the condition $d\leq 7$ by studying a confinement argument of \cite{ISG99} from $\bbR^2$. When considering one-signed compactly supported relative vorticity in 3D axi-symmetric setting, 
 vorticities are  confined in the region 
$\{r\leq C t^{1/4}\log t\}$ (\cite{Maffei2001}). Similarly, it is possible to make an estimate for higher dimensional cases, which might bring us beyond the restriction $d\leq7$. The key idea of   confinement  for {one-signed} vorticities came from  the two-dimensional case $\mathbb{R}^2$  by \cite{M94}, \cite{ISG99}, \cite{Serfati_pre} 
  (see also \cite{ILL2003}, \cite{ILL2007}, \cite{CD2019} for other two-dimensional domains).

\section{Proofs}

\subsection{Stream function}

We take $\Pi := \left\{ (r,z) : r\ge0, z \in \bbR \right\}$.
The scalar stream function $\psi$ is defined by \begin{equation}\label{eq_kernelofpsi}
	\begin{split}
		\psi(r,z) = \iint_{\Pi} G_{d}(r,\br{r},z,\br{z}) \omg(\br{r},\br{z}) d\bar{z} d\bar{r} \quad \mbox{with} \quad 	G_{d}(r,\br{r},z,\br{z}) := \frac{2\pi\bt_{d}}{d(d-2)\alp_{d}}\int_{0}^{\pi}\frac{(r\br{r})^{d-2}\cos\tht_{1}\sin^{d-3}\tht_{1}}{[r^{2}+\br{r}^{2}-2r\br{r}\cos\tht_{1}+(z-\br{z})^{2}]^{\frac{d}{2}-1}}d\tht_{1}
	\end{split}
\end{equation}  for $d\ge4$, where  $\alp_{d}$ is the volume of the unit ball in $\bbR^d$ and	\begin{equation*}
\begin{split} 
	\bt_{d}&=\begin{cases}
		1 &\;\text{if}\; d=4,\\
		\bigg(\int_{0}^{\pi}\sin^{d-4}\tht_{2}d\tht_{2}\bigg)\bigg(\int_{0}^{\pi}\sin^{d-5}\tht_{3}d\tht_{3}\bigg)\cdots\bigg(\int_{0}^{\pi}\sin\tht_{d-3}d\tht_{d-3}\bigg) & \;\text{if}\; d\geq5.
	\end{cases}
\end{split}
\end{equation*}


\subsection{Key Estimate for $ u^{r} $}

From \eqref{eq_urpsi}, we have
\begin{equation}\label{eq_urform}
	\begin{split}
		u^{r}(r,z)&=-\frac{1}{r^{d-2}}\int_{0}^{\ift}\int_{-\ift}^{\ift}\rd_{z}G_{d}(r,\br{r},z,\br{z})\omg(\br{r},\br{z})d\br{z}d\br{r} =c_{d}\iint_{\Pi}\frac{\br{r}^{\frac{d}{2}-2}(z-\br{z})}{r^{\frac{d}{2}}}F_{d}'\bigg(\frac{(r-\br{r})^{2}+(z-\br{z})^{2}}{r\br{r}}\bigg)\omg(\br{r},\br{z})d\br{z}d\br{r},
	\end{split}
\end{equation} where $ c_{d}=\frac{2\pi\bt_{d}}{d(d-2)\alp_{d}} $ and
\begin{equation}\label{eq_Fd}
	F_{d}(s):=\int_{0}^{\pi}\frac{\cos\tht\sin^{d-3}\tht}{[2(1-\cos\tht)+s]^{\frac{d}{2}-1}}d\tht,\quad s>0.
\end{equation}

The following is an extension of Feng--Sverak type estimate \cite[Prop. 2.11]{FeSv} to $ \bbR^{d} $.

\begin{prop}\label{prop_urest} 
	We have the estimate 
	\begin{equation}\label{eq_urest}
		\|u^{r}\|_{L^{\ift}(\bbR^{d})}\lesssim\|r^{d-2}\omg\|_{L^{1}(\bbR^{d})}^{\frac{1}{4}}\bigg\|\frac{\omg}{r^{d-2}}\bigg\|_{L^{1}(\bbR^{d})}^{\frac{1}{4}}\bigg\|\frac{\omg}{r^{d-2}}\bigg\|_{L^{\ift}(\bbR^{d})}^{\frac{1}{2}}
	\end{equation} under the assumption that the right hand side is finite. 
\end{prop}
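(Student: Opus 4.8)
The plan is to insert pointwise bounds for the kernel $F_d'$ into the representation \eqref{eq_urform} and then run a Feng--Sverak type decomposition of the $(\br r,\br z)$--integral, optimizing a single length scale. The first and most delicate task is the \emph{kernel analysis}. From \eqref{eq_Fd} I would show $|F_d'(s)|\lesssim s^{-1}$ as $s\to 0^+$ (the singularity comes entirely from the endpoint $\tht_1\to 0$, where $2(1-\cos\tht_1)\simeq\tht_1^2$ and the $\tht_1$--integral scales like $s^{-1}$), and $|F_d'(s)|\lesssim s^{-\frac d2-1}$ as $s\to\infty$. The large--$s$ decay is the crucial point: the naive leading term is $s^{-(\frac d2-1)}\int_0^\pi\cos\tht_1\sin^{d-3}\tht_1\,d\tht_1$, which \emph{vanishes} by the substitution $\tht_1\mapsto\pi-\tht_1$, so one gains an extra power of $s$ from the next term in the expansion. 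Writing $\rho^2:=(r-\br r)^2+(z-\br z)^2$ and $s=\rho^2/(r\br r)$, and using $|z-\br z|\le\rho$, these translate into the pointwise kernel bound
\begin{equation*}
\frac{\br r^{\frac d2-2}|z-\br z|}{r^{\frac d2}}\,|F_d'(s)|\;\lesssim\;\begin{cases}\ \rho^{-1}, & \rho^2\lesssim r\br r,\\[2pt]\ r\,\br r^{d-1}\rho^{-(d+1)}, & \rho^2\gtrsim r\br r.\end{cases}
\end{equation*}
Note that the region $\rho^2\lesssim r\br r$ forces $\br r\simeq r$, which will be used repeatedly.

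Next I would \emph{reduce to a normalized evaluation point}. Both sides of \eqref{eq_urest} are invariant under $z$--translation and scale identically (degree $-1$) under $\omg(\cdot)\mapsto\omg(\lmb\,\cdot)$, as one checks directly from the degree--$(-1)$ homogeneity of the kernel above and the homogeneities $A\mapsto\lmb^{-(2d-2)}A$, $B\mapsto\lmb^{-2}B$, $M\mapsto\lmb^{d-2}M$ of the three quantities $A:=\|r^{d-2}\omg\|_{L^1}\simeq\iint\br r^{2(d-2)}|\omg|$, $B:=\|\omg/r^{d-2}\|_{L^1}\simeq\iint|\omg|$, and $M:=\|\omg/r^{d-2}\|_{L^\infty}=\sup\br r^{-(d-2)}|\omg|$. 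Hence it suffices to bound $|u^r(1,0)|\lesssim A^{1/4}B^{1/4}M^{1/2}$.

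For the \emph{near field} $\rho^2\lesssim\br r$ (where $\br r\simeq 1$) I use $K\lesssim\rho^{-1}$ and split at a radius $\dlt$: on $\rho<\dlt$ I bound $|\omg|=\br r^{d-2}|\xi|\lesssim M$ and $\iint_{\rho<\dlt}\rho^{-1}\lesssim\dlt$, giving a contribution $\lesssim M\dlt$; on $\dlt<\rho\lesssim 1$ I use $\rho^{-1}\le\dlt^{-1}$ together with the fact that on $\br r\simeq 1$ the local mass satisfies $\iint|\omg|\lesssim\min(A,B)\le\sqrt{AB}$, giving $\lesssim \sqrt{AB}\,\dlt^{-1}$. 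Optimizing $M\dlt+\sqrt{AB}\,\dlt^{-1}$ at $\dlt=(AB)^{1/4}M^{-1/2}$ yields exactly $\lesssim M^{1/2}A^{1/4}B^{1/4}$ (the case $\dlt\gtrsim 1$, i.e. $M\lesssim\sqrt{AB}$, is handled by taking $\dlt\simeq 1$ and bounding the whole region by $M$).

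The \emph{far field} $\rho^2\gtrsim\br r$ is where I expect the main obstacle, and I would treat it by an analogous but more careful decomposition using $K\lesssim\br r^{d-1}\rho^{-(d+1)}$: split by a scale $R$ in $\rho$ (and, for small versus large $\br r$, by $\br r$), estimating the core with the $L^\infty$ bound $|\omg|\le\br r^{d-2}M$ and the tail with the weighted mass $\iint\br r^{d-1}|\omg|$, which I interpolate between $A$ and $B$ by log--convexity of $t\mapsto\iint\br r^{t}|\omg|$, and then optimize in $R$. The difficulty here is twofold: ensuring the relevant integrals converge and that the interpolation exponents recombine into precisely $A^{1/4}B^{1/4}M^{1/2}$, and verifying that the powers of $\br r$ stay in the admissible range — this is exactly where the dimensional bookkeeping enters. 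Together with the cancellation--based decay estimate for $F_d'$, this far--field analysis is the technical heart of the proof.
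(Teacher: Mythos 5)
Your kernel analysis is correct and coincides with the paper's Lemma \ref{lem_Fd'est}: the bound $|F_d'(s)|\lesssim s^{-1}$ near $s=0$ and, crucially, the cancellation $\int_0^\pi\cos\tht\sin^{d-3}\tht\,d\tht=0$ that upgrades the naive large-$s$ decay to $s^{-\frac d2-1}$. Your scaling reduction to $|u^r(1,0)|$ also matches the paper, and your near-field argument is sound --- it is precisely the proof of the Feng--Sverak lemma (Lemma \ref{lem_kernelf}) that the paper invokes as a black box, combined with $\br{r}\simeq 1$ to convert norms. The genuine gap is the far field, which you leave as a plan and correctly suspect is problematic. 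Two points. First, convergence is a non-issue: if $\rho\le\tfrac12$ then $\br{r}\simeq1$ forces $s=\rho^2/\br{r}\lesssim\rho^2\lesssim1$, so the far region lies in $\{\rho\gtrsim1\}$; there $\br{r}\le1+\rho\lesssim\rho$, so your kernel bound reduces to $K\lesssim\rho^{-2}$, which is square-integrable on a planar region avoiding the origin. Second, and fatally, your interpolation scheme cannot produce the stated exponents: the weight $\br{r}^{d-1}$ interpolates between $A$ and $B$ with exponent $\tht=\frac{d-1}{2(d-2)}>\frac12$, and optimizing the natural core/tail bounds $MR^{d-2}+R^{-(d+1)}A^{\tht}B^{1-\tht}$ in $R$ yields
\begin{equation*}
M^{\frac{d+1}{2d-1}}\big(A^{\tht}B^{1-\tht}\big)^{\frac{d-2}{2d-1}}\qquad\Big(=M^{\frac{5}{7}}A^{\frac{3}{14}}B^{\frac{1}{14}}\ \text{for }d=4\Big),
\end{equation*}
which is strictly weaker than $M^{\frac12}A^{\frac14}B^{\frac14}$ in the regime where $M$ is large compared to $A,B$, and hence does not prove the proposition. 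The failure is structural, not a matter of bookkeeping: in a scheme of this type (core estimated by $M$, tail by a weighted mass $\iint\br{r}^{w}|\omg|$, one scale $R$) you would need simultaneously $w=d-2$ (so the interpolation exponent is $\tfrac12$) and $w=d-4$ (so the core and tail powers of $R$ coincide), which is impossible.

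The missing idea --- and the paper's actual far-field argument --- is that no optimization or log-convexity is needed at all. Factor the density pointwise so that the powers of $\br{r}$ cancel identically,
\begin{equation*}
|\omg|=\big(\br{r}^{2(d-2)}|\omg|\big)^{\frac14}\cdot|\omg|^{\frac14}\cdot\bigg(\frac{|\omg|}{\br{r}^{\,d-2}}\bigg)^{\frac12},
\end{equation*}
pull the last factor out in $L^\infty$, and apply H\"older with exponents $(2,4,4)$ to the kernel against the remaining two factors:
\begin{equation*}
\iint_{\mathrm{far}}K\,|\omg|\,d\br{z}d\br{r}\ \le\ M^{\frac12}\,\bigg(\iint_{\mathrm{far}}K^2\bigg)^{\frac12}\bigg(\iint\br{r}^{2(d-2)}|\omg|\bigg)^{\frac14}\bigg(\iint|\omg|\bigg)^{\frac14}\ \lesssim\ M^{\frac12}A^{\frac14}B^{\frac14},
\end{equation*}
using exactly the square-integrability of $K\lesssim\rho^{-2}$ noted above. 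The exponents $(\tfrac14,\tfrac14,\tfrac12)$ come out automatically, and the far field --- which you anticipated as the technical heart --- collapses to two lines; the genuinely delicate ingredient is the cancellation-based decay of $F_d'$, which you already have.
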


We use the following two lemmas to prove the above proposition. The first one shows the upper bound of the derivative of $ F_{d} $.
 
\begin{lem}\label{lem_Fd'est}
	$ F_{d}' $ satisfies
	\begin{equation}\label{eq_Fdprime}
		|F_{d}'(s)|\lesssim\min\bigg\lbrace\frac{1}{s}, \frac{1}{s^{\frac{d}{2}+1}}\bigg\rbrace,\quad s>0.
	\end{equation}
\end{lem}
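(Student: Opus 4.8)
The plan is to differentiate \eqref{eq_Fd} under the integral sign and reduce the claim to two pointwise-in-$s$ bounds on the resulting angular integral. Since for $s$ in any compact subset of $(0,\infty)$ the integrand of \eqref{eq_Fd} and its $s$-derivative are continuous and uniformly bounded on $[0,\pi]$, differentiation under the integral is justified and gives
\[
F_d'(s) = -\left(\tfrac d2-1\right)\int_0^\pi \frac{\cos\theta\,\sin^{d-3}\theta}{[2(1-\cos\theta)+s]^{d/2}}\,d\theta .
\]
I would then prove the two bounds $|F_d'(s)|\lesssim 1/s$ and $|F_d'(s)|\lesssim 1/s^{d/2+1}$ \emph{separately and globally for all $s>0$}; since $\min\{1/s,\,1/s^{d/2+1}\}$ equals $1/s$ on $(0,1]$ and $1/s^{d/2+1}$ on $[1,\infty)$, taking the smaller of the two immediately yields \eqref{eq_Fdprime}.

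For the bound by $1/s$ I would simply discard the sign of $\cos\theta$. Using $\sin\theta\le\theta$ and $2(1-\cos\theta)=4\sin^2(\theta/2)\ge (4/\pi^2)\theta^2$ on $[0,\pi]$, the integral is dominated by $\int_0^\pi \theta^{d-3}(c_0\theta^2+s)^{-d/2}\,d\theta$ with $c_0=4/\pi^2$. The rescaling $\theta=\sqrt{s/c_0}\,u$ extracts exactly one factor $s^{-1}$ and leaves $\int_0^{\pi\sqrt{c_0/s}} u^{d-3}(u^2+1)^{-d/2}\,du$, which is bounded for every $s>0$ by the convergent integral $\int_0^\infty u^{d-3}(u^2+1)^{-d/2}\,du<\infty$; convergence at infinity holds since $(d-3)-d=-3<-1$, and at the origin since $d-3>-1$ because $d\ge4$. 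This gives the $O(1/s)$ estimate uniformly in $s$.

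This crude bound only yields $O(s^{-d/2})$ for large $s$, one power short, so for the $1/s^{d/2+1}$ bound I would exploit the cancellation carried by $\cos\theta$. The key observation is that the zeroth angular moment vanishes, $\int_0^\pi \cos\theta\,\sin^{d-3}\theta\,d\theta=\tfrac1{d-2}\left[\sin^{d-2}\theta\right]_0^\pi=0$, so I may subtract the constant $s^{-d/2}$ inside the integral at no cost:
\[
F_d'(s) = -\left(\tfrac d2-1\right)\int_0^\pi \cos\theta\,\sin^{d-3}\theta\left[\frac{1}{[2(1-\cos\theta)+s]^{d/2}}-\frac{1}{s^{d/2}}\right]d\theta .
\]
Writing the bracket in integral-remainder form, $\frac{1}{[A+s]^{d/2}}-\frac1{s^{d/2}}=-\tfrac d2\int_0^{A}(t+s)^{-d/2-1}\,dt$ with $A=2(1-\cos\theta)$, and using $(t+s)^{-d/2-1}\le s^{-d/2-1}$, I get the pointwise bound $d(1-\cos\theta)\,s^{-d/2-1}$ for its absolute value. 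Hence $|F_d'(s)|\le C s^{-d/2-1}\int_0^\pi |\cos\theta|(1-\cos\theta)\sin^{d-3}\theta\,d\theta=C' s^{-d/2-1}$ for all $s>0$, the remaining integral being a finite constant.

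The only genuinely delicate point is this last step: the naive absolute-value estimate loses one power of $s$, and recovering it forces one to notice the vanishing of the zeroth moment of $\cos\theta\,\sin^{d-3}\theta$ and then to quantify the first-order cancellation through the integral form of the difference $[A+s]^{-d/2}-s^{-d/2}$. Everything else reduces to a single scaling substitution; I would only take mild care that all implied constants depend on $d$ alone and not on $s$, and that the convergence of $\int_0^\infty u^{d-3}(u^2+1)^{-d/2}\,du$ indeed uses $d>2$.
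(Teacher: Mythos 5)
Your proof is correct, and it rests on the same two pillars as the paper's own argument: a scaling argument that captures the degeneration near $\theta=0$ for the bound $|F_d'(s)|\lesssim 1/s$, and the cancellation coming from $\int_0^\pi \cos\theta\,\sin^{d-3}\theta\,d\theta=0$ for the bound $|F_d'(s)|\lesssim s^{-d/2-1}$. The difference is in execution, and your version is tighter in two respects. The paper proves the first bound only for $0<s<1$ (splitting the angular integral at a small $\varepsilon_0$ where $\sin\theta\le 2\theta$ and $2(1-\cos\theta)\ge \theta^2/2$) and the second bound only asymptotically, for $s\ge M$, via the substitution $\tau=1/s$ and the Taylor expansion $(x+1)^{-d/2}=1-\tfrac{d}{2}x+O(x^2)$; the intermediate range $s\in[1,M]$ is then left to the (routine, but unstated) boundedness of $F_d'$ on compact subsets of $(0,\infty)$. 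Your bounds hold for every $s>0$: the global inequality $2(1-\cos\theta)\ge (4/\pi^2)\theta^2$ on $[0,\pi]$ removes the need to split the angular domain, and replacing the Taylor expansion by the exact identity $(A+s)^{-d/2}-s^{-d/2}=-\tfrac{d}{2}\int_0^A (t+s)^{-d/2-1}\,dt$ quantifies the cancellation uniformly rather than asymptotically, so no gap-filling is needed. Moreover, the cancellation that the paper exploits silently --- the first term of its expansion of $g(\tau)$ is exactly the vanishing moment --- is stated and used explicitly in your argument, which makes the origin of the extra power of decay transparent. What the paper's route buys instead is the precise leading-order asymptotics $F_d'(s)\sim -C_d\, s^{-d/2-1}$ as $s\to\infty$, slightly more information than the upper bound, though the lemma does not require it.
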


The second one is a basic lemma which is frequently used to estimate the convolution $ \frac{1}{|x|}\ast f $ when $ f\in (L^{1}\cap L^{\ift})(\bbR^{2}) $.

\begin{lem}[{{\cite[Lem. 2.10]{FeSv}}}]\label{lem_kernelf}
	Let $ f\in (L^{1}\cap L^{\ift})(\bbR^{2}) $ and let $ K:\bbR^{2}\To\bbR $ satisfy
	$$ |K(x)|\lesssim\frac{1}{|x-x_{0}|},\quad x\in\bbR^{2}, $$
	for some point $ x_{0}\in\bbR^{2} $. 
	Then
	\begin{equation}\label{eq_kernelf}
		\bigg|\int_{\bbR^{2}}K(x)f(x)dx\bigg|\lesssim\|f\|_{L^{1}(\bbR^{2})}^{\frac{1}{2}}\|f\|_{L^{\ift}(\bbR^{2})}^{\frac{1}{2}}.
	\end{equation}
\end{lem}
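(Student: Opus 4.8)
The plan is to discard the sign and fine structure of $K$ entirely, prove the stronger statement for the majorant $|f(x)|/|x-x_0|$, and then exploit the competition between the two norms through a split at a free scale. First I would use the hypothesis $|K(x)|\lesssim 1/|x-x_0|$ to reduce to
\[
\bigg|\int_{\bbR^{2}}K(x)f(x)\,dx\bigg|\lesssim\int_{\bbR^{2}}\frac{|f(x)|}{|x-x_0|}\,dx.
\]
For a parameter $R>0$ to be chosen at the end, I would then split the domain of integration into the ball $B_{R}(x_0)$ and its complement.

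On the near region $B_{R}(x_0)$ I would bound $|f|$ by $\|f\|_{L^{\ift}}$ and use the local integrability of the kernel in two dimensions: passing to polar coordinates centered at $x_0$ gives $\int_{B_{R}(x_0)}|x-x_0|^{-1}\,dx=2\pi R$, so this piece contributes $\lesssim R\,\|f\|_{L^{\ift}}$. On the far region $B_{R}(x_0)^{c}$ I would instead use $|x-x_0|^{-1}\le R^{-1}$ and integrate $|f|$, which gives $\lesssim R^{-1}\|f\|_{L^{1}}$. Summing the two pieces yields, for every $R>0$,
\[
\bigg|\int_{\bbR^{2}}K(x)f(x)\,dx\bigg|\lesssim R\,\|f\|_{L^{\ift}}+R^{-1}\|f\|_{L^{1}}.
\]

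It then remains to optimize in $R$. Assuming both norms are positive (otherwise the statement is trivial), the choice $R=(\|f\|_{L^{1}}/\|f\|_{L^{\ift}})^{1/2}$ balances the two terms and produces exactly $\|f\|_{L^{1}}^{1/2}\|f\|_{L^{\ift}}^{1/2}$, as claimed. I do not expect a genuine obstacle here, since the argument is elementary; the only point that truly uses the hypotheses is the finiteness of the near-region integral, which is where the two-dimensional setting enters — in $\bbR^{2}$ the singularity $1/|x-x_0|$ is locally integrable and the near-region bound scales linearly in $R$, so the balancing produces precisely the square-root interpolation. The same scheme works verbatim if the pointwise bound on $K$ is only assumed on the support of $f$.
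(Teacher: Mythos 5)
Your argument is correct and is essentially the proof of \cite[Lem.\ 2.10]{FeSv} that the paper cites rather than reproves: majorize $|K|$ by $|x-x_0|^{-1}$, split at a ball $B_R(x_0)$, bound the near piece by $2\pi R\|f\|_{L^\ift}$ and the far piece by $R^{-1}\|f\|_{L^1}$, then optimize in $R$. Indeed, exact optimization at $R=\big(\|f\|_{L^1}/(2\pi\|f\|_{L^\ift})\big)^{1/2}$ yields the constant $2\sqrt{2\pi}\,C$ appearing in the paper's commented-out sharp form, confirming you have recovered the intended argument.
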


Assuming that Lemma \ref{lem_Fd'est} holds, we can conclude Proposition \ref{prop_urest}.

\begin{proof}[Proof of Proposition \ref{prop_urest}]
	First, we let $ \lmb>0, z_{0}\in(-\ift,\ift) $ and define
	$$ \tld{u}^{r}(r,z):=u^{r}(\lmb r,\lmb z+z_{0}),\quad \tld{\omg}(r,z):=\lmb\omg(\lmb r,\lmb z+z_{0}). $$
	Then $ \tld{u}^{r} $ and $ \tld{\omg} $ satisfy the same relation between $ u^{r} $ and $ \omg $ as in \eqref{eq_urform}. Now we claim
	\begin{equation}\label{eq_urscaling}
		|u^{r}(1,0)|\lesssim\|r^{d-2}\omg\|_{L^{1}(\bbR^{d})}^{\frac{1}{4}}\bigg\|\frac{\omg}{r^{d-2}}\bigg\|_{L^{1}(\bbR^{d})}^{\frac{1}{4}}\bigg\|\frac{\omg}{r^{d-2}}\bigg\|_{L^{\ift}(\bbR^{d})}^{\frac{1}{2}}.
	\end{equation}
	Once this is shown, then \eqref{eq_urscaling} holds for $ \tld{u}^{r} $ and $ \tld{\omg} $ as well:
	$$ |\tld{u}^{r}(1,0)|\lesssim\|r^{d-2}\tld{\omg}\|_{L^{1}(\bbR^{d})}^{\frac{1}{4}}\bigg\|\frac{\tld{\omg}}{r^{d-2}}\bigg\|_{L^{1}(\bbR^{d})}^{\frac{1}{4}}\bigg\|\frac{\tld{\omg}}{r^{d-2}}\bigg\|_{L^{\ift}(\bbR^{d})}^{\frac{1}{2}}. 
	$$
	Then the left-hand side is
	$|\tld{u}^{r}(1,0)|=|u^{r}(\lmb,z_{0})|,$
	and the right-hand side becomes
	\begin{equation*}
		\begin{split}
			\|r^{d-2}\tld{\omg}\|_{L^{1}(\bbR^{d})}^{\frac{1}{4}}\bigg\|\frac{\tld{\omg}}{r^{d-2}}\bigg\|_{L^{1}(\bbR^{d})}^{\frac{1}{4}}\bigg\|\frac{\tld{\omg}}{r^{d-2}}\bigg\|_{L^{\ift}(\bbR^{d})}^{\frac{1}{2}}&=\lmb^{-\frac{2d-3}{4}}\|r^{d-2}\omg\|_{L^{1}(\bbR^{d})}^{\frac{1}{4}}\cdot\lmb^{-\frac{1}{4}}\bigg\|\frac{\omg}{r^{d-2}}\bigg\|_{L^{1}(\bbR^{d})}^{\frac{1}{4}}\cdot\lmb^{\frac{d-1}{2}}\bigg\|\frac{\omg}{r^{d-2}}\bigg\|_{L^{\ift}(\bbR^{d})}^{\frac{1}{2}}\\
			& =\|r^{d-2}\omg\|_{L^{1}(\bbR^{d})}^{\frac{1}{4}}\bigg\|\frac{\omg}{r^{d-2}}\bigg\|_{L^{1}(\bbR^{d})}^{\frac{1}{4}}\bigg\|\frac{\omg}{r^{d-2}}\bigg\|_{L^{\ift}(\bbR^{d})}^{\frac{1}{2}}.
		\end{split}
	\end{equation*}
	Taking supremum for $ (\lmb,z_{0})\in\Pi $ on the left-hand side finishes the proof.\\
	
	Note that
	\begin{equation*}
		u^{r}(1,0)=-c_{d}\iint_{\Pi}r^{\frac{d}{2}-2}zF_{d}'\bigg(\frac{(r-1)^{2}+z^{2}}{r}\bigg)\omg(r,z)dzdr,
	\end{equation*}
	and
	$$ \|r^{d-2}\omg\|_{L^{1}(\bbR^{d})}^{\frac{1}{4}}\bigg\|\frac{\omg}{r^{d-2}}\bigg\|_{L^{1}(\bbR^{d})}^{\frac{1}{4}}\bigg\|\frac{\omg}{r^{d-2}}\bigg\|_{L^{\ift}(\bbR^{d})}^{\frac{1}{2}}\simeq_{d}\|r^{2(d-2)}\omg\|_{L^{1}(\Pi)}^{\frac{1}{4}}\|\omg\|_{L^{1}(\Pi)}^{\frac{1}{4}}\bigg\|\frac{\omg}{r^{d-2}}\bigg\|_{L^{\ift}(\Pi)}^{\frac{1}{2}}.
	$$
	We claim
	\begin{equation}\label{eq_urintbd}
		\bigg|r^{\frac{d}{2}-2}zF_{d}'\bigg(\frac{(r-1)^{2}+z^{2}}{r}\bigg)\bigg|\lesssim\begin{cases}
			\frac{1}{[(r-1)^{2}+z^{2}]^{\frac{1}{2}}} & \text{ in }I_{1}:=\lbrace(r,z)\in\Pi : r\in[\frac{1}{2},2], z\in[-1,1]\rbrace\\
			\frac{1}{(r-1)^{2}+z^{2}} & \text{ in }I_{2}:=\Pi\setminus I_{1}
		\end{cases}.
	\end{equation}
	Once this is shown, then we get
	\begin{equation*}
		\begin{split}
			\bigg|\iint_{I_{2}}r^{\frac{d}{2}-2}zF_{d}'\bigg(\frac{(r-1)^{2}+z^{2}}{r}\bigg)\omg dzdr\bigg|&\lesssim\iint_{I_{2}}\frac{1}{(r-1)^{2}+z^{2}}\cdot r^{\frac{d}{2}-1}\cdot\omg^{\frac{1}{4}}\cdot\omg^{\frac{1}{4}}\cdot\frac{\omg^{\frac{1}{2}}}{r^{\frac{d}{2}-1}}dzdr\\
			&\lesssim\|r^{2(d-2)}\omg\|_{L^{1}(I_{2})}^{\frac{1}{4}}\|\omg\|_{L^{1}(I_{2})}^{\frac{1}{4}}\bigg\|\frac{\omg}{r^{d-2}}\bigg\|_{L^{\ift}(I_{2})}^{\frac{1}{2}},
		\end{split}
	\end{equation*}
	 and using \eqref{eq_kernelf} from Lemma \ref{lem_kernelf}, we have
	 \begin{equation*}
	 	\begin{split}
	 		\bigg|\iint_{I_{1}}r^{\frac{d}{2}-2}zF_{d}'\bigg(\frac{(r-1)^{2}+z^{2}}{r}\bigg)\omg dzdr\bigg|&\lesssim\|\omg\|_{L^{1}(I_{1})}^{\frac{1}{2}}\|\omg\|_{L^{\ift}(I_{1})}^{\frac{1}{2}}\lesssim\|r^{2(d-2)}\omg\|_{L^{1}(I_{1})}^{\frac{1}{4}}\|\omg\|_{L^{1}(I_{1})}^{\frac{1}{4}}\bigg\|\frac{\omg}{r^{d-2}}\bigg\|_{L^{\ift}(I_{1})}^{\frac{1}{2}}.
	 	\end{split}
	 \end{equation*}
 	Combining these two finishes the proof of \eqref{eq_urscaling}.\\
 	
 	We use \eqref{eq_Fdprime} from Lemma \ref{lem_Fd'est} to prove \eqref{eq_urintbd}. Note that in $ I_{1} $, we have
 	$$ \bigg|r^{\frac{d}{2}-2}zF_{d}'\bigg(\frac{(r-1)^{2}+z^{2}}{r}\bigg)\bigg|\lesssim\frac{r^{\frac{d}{2}-1}|z|}{(r-1)^{2}+z^{2}}\lesssim\frac{1}{[(r-1)^{2}+z^{2}]^{\frac{1}{2}}},
 	$$
 	due to $ r\leq2 $ and $ |z|\leq[(r-1)^{2}+z^{2}]^{\frac{1}{2}} $. Also in $ I_{2} $, we get
 	$$ \bigg|r^{\frac{d}{2}-2}zF_{d}'\bigg(\frac{(r-1)^{2}+z^{2}}{r}\bigg)\bigg|\lesssim\frac{r^{d-1}|z|}{[(r-1)^{2}+z^{2}]^{\frac{d}{2}+1}}\lesssim\frac{1}{(r-1)^{2}+z^{2}},
 	$$
 	because of
 	$$ r\leq[r^{2}+z^{2}]^{\frac{1}{2}}\leq\begin{cases}
 		[(r-1)^{2}+z^{2}]^{\frac{1}{2}} & \text{ in } I_{2}\cap\lbrace r\leq \frac{1}{2}\rbrace, \\
 		2[(r-1)^{2}+z^{2}]^{\frac{1}{2}} & \text{ in } I_{2}\cap\lbrace r\geq 2\rbrace. 
 	\end{cases} $$
\end{proof}

\begin{proof}[Proof of Lemma \ref{lem_Fd'est}]
	For the first part, we prove that there exists $ C>0 $ that satisfies
	$$ s|F_{d}'(s)|\leq C,\quad 0<s<1. $$
	Using the fact
	$$ \sin\tht=\tht+O(\tht^{3})\quad\text{and}\quad 2(1-\cos\tht)=\tht^{2}+O(\tht^{4})\quad\text{as}\quad\tht\To0, $$
	we take a sufficiently small $ \veps_{0}>0 $ such that the following holds:
	$$ \sin\tht\leq 2\tht,\quad 2(1-\cos\tht)\geq\frac{\tht^{2}}{2},\quad \tht\in[0,\veps_{0}]. $$
	We let $ 0<s<1 $. First, we split the upper bound of $ s|F_{d}'(s)| $ into
	\begin{equation*}
		\begin{split}
			s|F_{d}'(s)|&\lesssim\int_{0}^{\pi}\frac{s\sin^{d-3}\tht}{[2(1-\cos\tht)+s]^{\frac{d}{2}}}d\tht=\underbrace{\int_{0}^{\veps_{0}}\frac{s\sin^{d-3}\tht}{[2(1-\cos\tht)+s]^{\frac{d}{2}}}d\tht}_{=(A)}+\underbrace{\int_{\veps_{0}}^{\pi}\frac{s\sin^{d-3}\tht}{[2(1-\cos\tht)+s]^{\frac{d}{2}}}d\tht}_{=(B)}.
		\end{split}
	\end{equation*}
	For $ (A) $, we use the change of variables $ \alp=\frac{\tht}{\sqrt{2s}} $ to get
	\begin{equation*}
		\begin{split}
			(A)&\leq\int_{0}^{\veps_{0}}\frac{s(2\tht)^{d-3}}{(\frac{\tht^{2}}{2}+s)^{\frac{d}{2}}}d\tht=2^{\frac{3d}{2}-3}\int_{0}^{\veps_{0}}\frac{s\tht^{d-3}}{(\tht^{2}+2s)^{\frac{d}{2}}}d\tht=2^{\frac{3d}{2}-4}\int_{0}^{\frac{\veps_{0}}{\sqrt{2s}}}\frac{\alp^{d-3}}{(\alp^{2}+1)^{\frac{d}{2}}}d\alp\leq 2^{\frac{3d}{2}-4}\int_{0}^{\ift}\frac{\alp^{d-3}}{(\alp^{2}+1)^{\frac{d}{2}}}d\alp<\ift.
		\end{split}
	\end{equation*}
	For $ (B) $, we use the fact $ 2(1-\cos\tht)\geq c $ for some $ c>0 $ when $ \tht\in[\veps_{0},\pi] $ to get
	$$ (B)\leq\int_{\veps_{0}}^{\pi}\frac{s}{c^{\frac{d}{2}}}d\tht<\ift.
	$$
	For the second part, we prove
	$$ |F_{d}'(s)|\lesssim\frac{1}{s^{\frac{d}{2}+1}},\quad s\geq M. $$
	for some $ M\geq1 $. We let $ \tau:=\frac{1}{s} $ and define $ g(\tau):=F_{d}'(\frac{1}{\tau}) $. Then we have
	\begin{equation*}
		\begin{split}
			g(\tau)&=-\int_{0}^{\pi}\frac{\cos\tht\sin^{d-3}\tht}{[2(1-\cos\tht)+\frac{1}{\tau}]^{\frac{d}{2}}}d\tht=-\tau^{\frac{d}{2}}\int_{0}^{\pi}\frac{\cos\tht\sin^{d-3}\tht}{[2(1-\cos\tht)\tau+1]^{\frac{d}{2}}}d\tht.
		\end{split}
	\end{equation*}
	Then we use the expansion
	$$ \frac{1}{(x+1)^{\frac{d}{2}}}=1-\frac{d}{2}x+O(x^{2})
	$$
	to get
	\begin{equation*}
		\begin{split}
			g(\tau)&=-\tau^{\frac{d}{2}}\bigg[\int_{0}^{\pi}\cos\tht\sin^{d-3}\tht d\tht-\tau\cdot\frac{d}{2}\int_{0}^{\pi}2\cos\tht\sin^{d-3}\tht(1-\cos\tht)d\tht+O(\tau^{2})\bigg]
			=-C_{d}\tau^{\frac{d}{2}+1}+O(\tau^{\frac{d}{2}+2}),
		\end{split}
	\end{equation*}
	for some constant $ C_{d}>0 $. Thus, 
	we get
	$$ |F_{d}'(s)|=\bigg|g\bigg(\frac{1}{s}\bigg)\bigg|\lesssim\frac{1}{s^{\frac{d}{2}+1}}.
	$$
\end{proof}

\subsection{Proof of Theorem \ref{thm:1}}

Before proving Theorem \ref{thm:1}, we present the following estimates for $ \nrm{\omg}_{L^{\ift}} $ and $ \nrm{r^{2}\omg}_{L^{1}} $. This is an extension of Saint-Raymond \cite[Prop. 2.3]{Raymond} to $ \bbR^{4} $.
 
\begin{lem}\label{lem_omgLiftest}
	Let $ u_{0} $ and $ \omg_{0} $ satisfy the assumptions in Theorem \ref{thm:1}. Then for all $ t\geq0 $, we have
	\begin{equation}\label{eq_omgLiftest}
		\|\omg(t)\|_{L^{\ift}(\bbR^{4})}\leq\bigg(\bigg\|\frac{\omg_{0}}{r^{2}}\bigg\|_{L^{\ift}(\bbR^{4})}+\|\omg_{0}\|_{L^{\ift}(\bbR^{4})}\bigg)\bigg(1+\int_{0}^{t}\|u^{r}(s)\|_{L^{\ift}(\bbR^{4})}ds\bigg)^{2}
	\end{equation}
	and
	\begin{equation}\label{eq_r2omgest}
		\|r^{2}\omg(t)\|_{L^{1}(\bbR^{4})}\leq\bigg(\bigg\|\frac{\omg_{0}}{r^{2}}\bigg\|_{L^{1}(\bbR^{4})}+\|r^{2}\omg_{0}\|_{L^{1}(\bbR^{4})}\bigg)\bigg(1+\int_{0}^{t}\|u^{r}(s)\|_{L^{\ift}(\bbR^{4})}ds\bigg)^{4}.
	\end{equation}
\end{lem}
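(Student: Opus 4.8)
The plan is to run both estimates along the particle trajectories, exploiting that the relative vorticity $\xi := r^{-2}\omg$ is transported by \eqref{eq_transport}. Let $\Phi_t$ be the flow map of $u$, which is volume-preserving since $\nb\cdot u = 0$, and for a fixed starting point $(r_0,z_0)$ write $(R(t),Z(t)) := \Phi_t(r_0,z_0)$ for the emanating trajectory. Because the particle moves with velocity $u = u^r e^r + u^z e^z$, the radial coordinate satisfies $\dot R(t) = u^r(t,R(t),Z(t))$, so that
\begin{equation*}
	R(t) \le r_0 + \int_0^t \nrm{u^r(s)}_{L^\ift(\bbR^4)}\,ds =: r_0 + A(t).
\end{equation*}
Moreover $\xi$ is constant along the trajectory: $\xi(t,R(t),Z(t)) = \xi_0(r_0,z_0) = r_0^{-2}\omg_0(r_0,z_0)$. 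These two facts — the radial growth bound and the pointwise conservation of $\xi$ — are the only analytic inputs; the rest is bookkeeping of powers of $r$, organized by a split into a near-axis region $\{r_0 \le 1\}$ and an away-from-axis region $\{r_0 > 1\}$.

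For \eqref{eq_omgLiftest}, I would write $\omg = r^2\xi$ along the trajectory to get
\begin{equation*}
	|\omg(t,R(t),Z(t))| = R(t)^2\,|\xi_0(r_0,z_0)| \le (r_0 + A(t))^2\,|\xi_0(r_0,z_0)|.
\end{equation*}
If $r_0 \le 1$, I bound $(r_0 + A(t))^2 \le (1 + A(t))^2$ and $|\xi_0| \le \nrm{\omg_0/r^2}_{L^\ift(\bbR^4)}$. If $r_0 > 1$, I instead factor $(r_0 + A(t))^2 = r_0^2(1 + A(t)/r_0)^2 \le r_0^2(1 + A(t))^2$ and use $r_0^2|\xi_0(r_0,z_0)| = |\omg_0(r_0,z_0)| \le \nrm{\omg_0}_{L^\ift(\bbR^4)}$. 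In both cases the right-hand side is at most $(1 + A(t))^2\big(\nrm{\omg_0/r^2}_{L^\ift(\bbR^4)} + \nrm{\omg_0}_{L^\ift(\bbR^4)}\big)$, and taking the supremum over starting points yields \eqref{eq_omgLiftest}.

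For \eqref{eq_r2omgest}, I would combine $\omg = r^2\xi$ with the volume-preserving change of variables $x = \Phi_t(x_0)$:
\begin{equation*}
	\nrm{r^2\omg(t)}_{L^1(\bbR^4)} = \int_{\bbR^4} r^4\,|\xi(t,x)|\,dx = \int_{\bbR^4} R(t)^4\,|\xi_0(x_0)|\,dx_0 \le \int_{\bbR^4} (r_0 + A(t))^4\,|\xi_0(x_0)|\,dx_0,
\end{equation*}
where $R(t)$ now denotes the radial coordinate of $\Phi_t(x_0)$. Splitting the domain into $\{r_0 \le 1\}$ and $\{r_0 > 1\}$ exactly as above — on the first region using $(r_0 + A(t))^4 \le (1 + A(t))^4$ with $\int |\xi_0|\,dx_0 \le \nrm{\omg_0/r^2}_{L^1(\bbR^4)}$, and on the second using $(r_0 + A(t))^4 \le r_0^4(1 + A(t))^4$ together with $r_0^4|\xi_0| = r_0^2|\omg_0|$ — produces the factor $(1 + A(t))^4$ multiplying $\nrm{\omg_0/r^2}_{L^1(\bbR^4)} + \nrm{r^2\omg_0}_{L^1(\bbR^4)}$, which is exactly \eqref{eq_r2omgest}.

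I do not anticipate a genuine obstacle: both inequalities collapse to the radial growth estimate $R(t) \le r_0 + A(t)$ and the conservation of $\xi$ along trajectories, and the only point needing care is the dependence on the starting radius $r_0$, which the two-region split absorbs into a near-axis part (controlled by the weighted norms of $\omg_0/r^2$) and an away-from-axis part (controlled by $\nrm{\omg_0}_{L^\ift}$ and $\nrm{r^2\omg_0}_{L^1}$). The only technical points are that the $H^s$ solution of Theorem \ref{thm:1} has a well-defined volume-preserving flow and that $A(t) < \ift$ on the existence interval, both of which follow from the assumed regularity.
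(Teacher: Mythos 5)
Your proposal is correct and is essentially the paper's own argument: both rest on conservation of $\xi = r^{-2}\omg$ along the volume-preserving flow, the radial displacement bound $\Phi_t^r(x_0) \le r_{x_0} + \int_0^t \|u^r(s)\|_{L^\infty(\bbR^4)}\,ds$, and a near-axis/far-axis dichotomy that allocates the weight $r^{\pm2}$ to the appropriate norm of $\omg_0$. The only difference is organizational: you split according to the initial radius ($r_0\le 1$ versus $r_0>1$), whereas the paper splits according to the radius at time $t$ ($r_x\le R(t)$ versus $r_x>R(t)$, then shows that far points must originate from $r_y>1$); since the flow map is a bijection, the two bookkeepings are equivalent and produce the same factor $R(t)=1+\int_0^t\|u^r(s)\|_{L^\infty(\bbR^4)}\,ds$.
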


\begin{proof}[Proof of Lemma \ref{lem_omgLiftest}]
	We fix $ t\geq0 $ and set
	$$ R(t):=1+\int_{0}^{t}\|u^{r}(s)\|_{L^{\ift}(\bbR^{4})}ds, $$
	with $ R_{0}=R(0) $. Then we denote the flow map $ \Phi_{t}(\cdot)=\Phi(t,\cdot) $, which is defined as the unique solution to the ODE
	\begin{equation}\label{eq_flowmapode}
		\frac{d}{dt}\Phi_{t}(x)=u\big(t,\Phi_{t}(x)\big),\quad \Phi_{0}(x)=x.
	\end{equation}
	Note that $ \Phi $ is well-defined because of $ u(t)\in C^{1}(\bbR^{4}) $, by the Sobolev embedding theorem. Then observe that the relative vorticity $ \xi=r^{-2}\omg $ is conserved along the flow $ \Phi $:
	$$ \xi\big(t,\Phi_{t}(x)\big)=\xi_{0}(x), $$
	which implies
	$$ \frac{\omg\big(t,\Phi_{t}(x)\big)}{[\Phi_{t}^{r}(x)]^{2}}=\frac{\omg_{0}(x)}{r_{x}^{2}}, $$
	where $ \Phi_{t}^{r} $ is the $ r- $th component of $ \Phi_{t} $ and $ r_{x}=\sqrt{x_{1}^{2}+x_{2}^{2}+x_{3}^{2}} $.\\
	We denote $ A(t):=\lbrace x\in\bbR^{4} : r_{x}\leq R(t)\rbrace $. To prove \eqref{eq_omgLiftest}, first we consider the case $ x\in A(t) $. Then we have
	\begin{equation*}
		\begin{split}
			|\omg(t,x)|&=r_{x}^{2}\cdot\frac{|\omg(t,x)|}{r_{x}^{2}}\leq R(t)^{2}\bigg\|\frac{\omg(t)}{r^{2}}\bigg\|_{L^{\ift}(\bbR^{4})}=R(t)^{2}\bigg\|\frac{\omg_{0}}{r^{2}}\bigg\|_{L^{\ift}(\bbR^{4})}.
		\end{split}
	\end{equation*}
	Now we consider the other case $ x\in A(t)^{C}=\lbrace x\in\bbR^{4} : r_{x}>R(t)\rbrace $. Here, we take $ y\in\bbR^{4} $ that satisfies $ \Phi_{t}(y)=x $. Then we have the relation
	$$ r_{x}=\Phi_{t}^{r}(y)=r_{y}+\int_{0}^{t}u^{r}\big(s,\Phi_{s}(y)\big)ds, $$
	which gives us $ r_{y}>R(0)=1 $ and
	$$ r_{x}\leq r_{y}+\int_{0}^{t}\|u^{r}(s)\|_{L^{\ift}(\bbR^{4})}ds. $$
	Using these, we have
	$$ \frac{r_{x}}{r_{y}}\leq 1+\frac{1}{r_{y}}\int_{0}^{t}\|u^{r}(s)\|_{L^{\ift}(\bbR^{4})}ds\leq1+\int_{0}^{t}\|u^{r}(s)\|_{L^{\ift}(\bbR^{4})}ds=R(t). $$
	From this, we get
	\begin{equation*}
		\begin{split}
			|\omg(t,x)|&=r_{x}^{2}\cdot\frac{|\omg(t,x)|}{r_{x}^{2}}=r_{x}^{2}\cdot\frac{\big|\omg\big(t,\Phi_{t}(y)\big)\big|}{[\Phi_{t}^{r}(y)]^{2}}=r_{x}^{2}\cdot\frac{|\omg_{0}(y)|}{r_{y}^{2}}\leq R(t)^{2}\|\omg_{0}\|_{L^{\ift}(\bbR^{4})}.
		\end{split}
	\end{equation*}
	Thus, we obtain
	$$ \|\omg(t)\|_{L^{\ift}(\bbR^{4})}\leq\max\bigg\lbrace\bigg\|\frac{\omg_{0}}{r^{2}}\bigg\|_{L^{\ift}(\bbR^{4})},\;\|\omg_{0}\|_{L^{\ift}(\bbR^{4})}\bigg\rbrace R(t)^{2}\leq\bigg(\bigg\|\frac{\omg_{0}}{r^{2}}\bigg\|_{L^{\ift}(\bbR^{4})}+\|\omg_{0}\|_{L^{\ift}(\bbR^{4})}\bigg)R(t)^{2}. $$
	Similarly, to prove \eqref{eq_r2omgest}, we split the domain of $ \|r^{2}\omg(t)\|_{L^{1}(\bbR^{4})} $ into $ A(t) $ and $ A(t)^{C} $:
	$$ \int_{\bbR^{4}}r_{x}^{2}|\omg|dx=\underbrace{\int_{A(t)}r_{x}^{2}|\omg(t,x)|dx}_{=(I)}+\underbrace{\int_{A(t)^{C}}r_{x}^{2}|\omg(t,x)|dx}_{=(II)}. $$
	For $ (I) $, we have
	$$ (I)=\int_{A(t)}r_{x}^{4}\cdot\frac{|\omg(t,x)|}{r_{x}^{2}}dx\leq R(t)^{4}\cdot\int_{A(t)}\frac{|\omg(t,x)|}{r_{x}^{2}}dx\leq R(t)^{4}\bigg\|\frac{\omg(t)}{r^{2}}\bigg\|_{L^{1}(\bbR^{4})}=R(t)^{4}\bigg\|\frac{\omg_{0}}{r^{2}}\bigg\|_{L^{1}(\bbR^{4})}. $$
	For $ (II) $, we get
	\begin{equation*}
		\begin{split}
			(II)&=\int_{A(t)^{C}}r_{x}^{4}\cdot\frac{|\omg(t,x)|}{r_{x}^{2}}dx=\int_{\Phi_{-t}\big(A(t)^{C}\big)}\Phi_{t}^{r}(y)^{4}\cdot\frac{|\omg\big(t,\Phi_{t}(y)\big)|}{[\Phi_{t}^{r}(y)]^{2}}\cdot|\nb_{y}\Phi_{t}(y)|dy\\
			&=\int_{\Phi_{-t}\big(A(t)^{C}\big)}\Phi_{t}^{r}(y)^{4}\cdot\frac{|\omg_{0}(y)|}{r_{y}^{2}}dy\leq\int_{\lbrace r_{y}>1\rbrace}\bigg(\frac{\Phi_{t}^{r}(y)}{r_{y}}\bigg)^{4}\cdot r_{y}^{2}|\omg_{0}(y)|dy\leq R(t)^{4}\|r^{2}\omg_{0}\|_{L^{1}(\bbR^{4})}.
		\end{split}
	\end{equation*}
	Here, $ \Phi_{-t} $ denotes the inverse map of $ \Phi_{t} $. Hence, we have
	$$ \|r^{2}\omg(t)\|_{L^{1}(\bbR^{4})}
	\leq\bigg(\bigg\|\frac{\omg_{0}}{r^{2}}\bigg\|_{L^{1}(\bbR^{4})}+\|r^{2}\omg_{0}\|_{L^{1}(\bbR^{4})}\bigg)R(t)^{4}. $$
\end{proof}

\begin{proof}[Proof of Theorem \ref{thm:1}]
	We fix $ t\geq0 $. Then we use \eqref{eq_urest} from Proposition \ref{prop_urest} and \eqref{eq_r2omgest} from Lemma \ref{lem_omgLiftest} to get
	\begin{equation*}
		\begin{split}
			\dot{R}(t)&=\|u^{r}(t)\|_{L^{\ift}(\bbR^{4})}\leq C\|r^{2}\omg(t)\|_{L^{1}(\bbR^{4})}^{\frac{1}{4}}\bigg\|\frac{\omg(t)}{r^{2}}\bigg\|_{L^{1}(\bbR^{4})}^{\frac{1}{4}}\bigg\|\frac{\omg(t)}{r^{2}}\bigg\|_{L^{\ift}(\bbR^{4})}^{\frac{1}{2}}\\
			&\leq C \bigg(\bigg\|\frac{\omg_{0}}{r^{2}}\bigg\|_{L^{1}(\bbR^{4})}+\|r^{2}\omg_{0}\|_{L^{1}(\bbR^{4})}\bigg)^{\frac{1}{4}}\bigg\|\frac{\omg_{0}}{r^{2}}\bigg\|_{L^{1}(\bbR^{4})}^{\frac{1}{4}}\bigg\|\frac{\omg_{0}}{r^{2}}\bigg\|_{L^{\ift}(\bbR^{4})}^{\frac{1}{2}}R(t),
		\end{split}
	\end{equation*}
	for some $ C>0 $. Denoting the coefficient of $ R(t) $ in the above as $ C_{0}>0 $, we have $ R(t)\leq e^{C_{0}t}.$
	Then from \eqref{eq_omgLiftest} in Lemma \ref{lem_omgLiftest}, we obtain
	\begin{equation*}
		\begin{split}
			\|\omg(t)\|_{L^{\ift}(\bbR^{4})}&\leq C_{1}R(t)^{2} \leq C_{1}e^{2C_{0}t},
		\end{split}
	\end{equation*}
	with $ C_{1}=\|r^{-2}\omg_{0}\|_{L^{\ift}(\bbR^{4})}+\|\omg_{0}\|_{L^{\ift}(\bbR^{4})} $. Hence, the BKM criterion guarantees the global regularity.
\end{proof}

When the initial vorticity is of compact support, the support growth can be only exponential in time. 
\begin{prop}\label{prop_Rtgrowth}
	Let $ S(t):=\sup_{\tau \in [0,t]} \sup\lbrace r : (r,z)\in\supp \omg(\tau)\rbrace $, where $ \omg(t) $ is the solution of \eqref{eq_vortformNd} with a smooth initial data $ \omg_{0} $ with compact support. 
	Then, we have for some $C>0$ depending on the initial data that
	\begin{equation}\label{eq_Rtgrowth}
		S(t)\leq S(0)e^{Ct},\quad t\geq0.
	\end{equation}
\end{prop}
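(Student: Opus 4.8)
The plan is to turn this geometric statement into a Gr\"onwall inequality for $S(t)$, with Proposition~\ref{prop_urest} supplying the bound on the radial velocity. First I would record that the support of $\omg$ is transported by the flow. Since the relative vorticity $\xi=r^{-(d-2)}\omg$ solves the transport equation \eqref{eq_transport}, its support—and hence that of $\omg=r^{d-2}\xi$—is carried by the flow map $\Phi_t$ of \eqref{eq_flowmapode}. For $y\in\supp\omg_0$ the radial coordinate obeys $\frac{d}{dt}\Phi_t^r(y)=u^r(t,\Phi_t(y))$, so integrating and using $r_y\le S(0)$ gives
\begin{equation*}
	\Phi_\tau^r(y)\le S(0)+\int_0^\tau\nrm{u^r(s)}_{L^\ift(\bbR^d)}\,ds.
\end{equation*}
Taking the supremum over $y\in\supp\omg_0$ and over $\tau\in[0,t]$ (the integrand is nonnegative) yields $S(t)\le S(0)+\int_0^t\nrm{u^r(s)}_{L^\ift(\bbR^d)}\,ds$.

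Next I would bound $\nrm{u^r}_{L^\ift}$ in terms of $S$ via \eqref{eq_urest}. Two of the three factors there, $\nrm{\omg/r^{d-2}}_{L^1(\bbR^d)}$ and $\nrm{\omg/r^{d-2}}_{L^\ift(\bbR^d)}$, are precisely the $L^1$ and $L^\ift$ norms of $\xi$, which are conserved by \eqref{eq_transport}; they equal their constant initial values. The remaining factor $\nrm{r^{d-2}\omg}_{L^1}$ is \emph{not} conserved, and this is where the support confinement enters: since $\omg(s)$ is supported in $\set{r\le S(s)}$, writing $r^{d-2}=r^{2(d-2)}\cdot r^{-(d-2)}$ and bounding $r^{2(d-2)}\le S(s)^{2(d-2)}$ on the support gives
\begin{equation*}
	\nrm{r^{d-2}\omg(s)}_{L^1(\bbR^d)}\le S(s)^{2(d-2)}\nrm{\omg(s)/r^{d-2}}_{L^1(\bbR^d)}=S(s)^{2(d-2)}\nrm{\omg_0/r^{d-2}}_{L^1(\bbR^d)}.
\end{equation*}
Substituting into \eqref{eq_urest} and collecting the conserved quantities into a single constant $C_{\omg_0}$ yields $\nrm{u^r(s)}_{L^\ift}\le C_{\omg_0}\,S(s)^{(d-2)/2}$. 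For $d=4$ the exponent $(d-2)/2$ equals $1$, so $\nrm{u^r(s)}_{L^\ift}\le C_{\omg_0}\,S(s)$. Combining with the previous display gives $S(t)\le S(0)+C_{\omg_0}\int_0^t S(s)\,ds$, and Gr\"onwall's inequality produces $S(t)\le S(0)e^{C_{\omg_0}t}$.

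The decisive analytic step—the one that upgrades Miller's $\dot S\aleq S^2$ to $\dot S\aleq S$—is the support-confinement bound on $\nrm{r^{d-2}\omg}_{L^1}$ combined with the conservation of the relative-vorticity norms; everything else is bookkeeping. The only genuine obstacle is ensuring the conserved factors in \eqref{eq_urest} are finite, i.e. $r^{-(d-2)}\omg_0\in L^\ift$. As the Remark after Theorem~\ref{thm:2} notes, a generic smooth compactly supported $\omg_0$ meeting the axis fails this when $d\ge4$ (one expects $\omg_0\sim r$, so $\omg_0/r^{d-2}$ blows up at $r=0$), but the hypothesis holds whenever $\supp\omg_0$ is bounded away from the symmetry axis, which is the setting in which the proposition is used. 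Finally, I would point out that this scheme gives $\dot S\aleq S^{(d-2)/2}$ for general $d$, so the exponential conclusion is special to the case $d=4$ highlighted in the discussion.
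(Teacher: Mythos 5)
Your proof is correct and takes essentially the same route as the paper: both arguments substitute the support-confinement bound $\nrm{r^{d-2}\omg}_{L^{1}}\le S(t)^{2(d-2)}\nrm{\omg/r^{d-2}}_{L^{1}}$ into \eqref{eq_urest}, exploit the conservation of the $L^{1}$ and $L^{\ift}$ norms of the relative vorticity $\xi=r^{-(d-2)}\omg$, and close with a linear ODE/Gr\"onwall argument to obtain $S(t)\le S(0)e^{Ct}$. The differences are cosmetic: you use the integral form of the support inequality where the paper writes $S'(t)\le\nrm{u^{r}(t)}_{L^{\ift}}$, and your general-$d$ bookkeeping (exponent $(d-2)/2$, hence exponential growth exactly at $d=4$) and your remark on the finiteness of $\nrm{\omg_{0}/r^{d-2}}_{L^{\ift}}$ make explicit what the paper leaves implicit.
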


\begin{proof}
	Using \eqref{eq_urest}, we have
	\begin{equation*}
		\begin{split}
			S'(t)&\leq\|u^{r}(t)\|_{L^{\ift}(\bbR^{4})}\lesssim\|r^{2}\omg(t)\|_{L^{1}(\bbR^{4})}^{\frac{1}{4}}\bigg\|\frac{\omg(t)}{r^{2}}\bigg\|_{L^{1}(\bbR^{4})}^{\frac{1}{4}}\bigg\|\frac{\omg(t)}{r^{2}}\bigg\|_{L^{\ift}(\bbR^{4})}^{\frac{1}{2}}\\
			&\leq S(t)\bigg\|\frac{\omg(t)}{r^{2}}\bigg\|_{L^{1}(\bbR^{4})}^{\frac{1}{2}}\bigg\|\frac{\omg(t)}{r^{2}}\bigg\|_{L^{\ift}(\bbR^{4})}^{\frac{1}{2}}=S(t)\bigg\|\frac{\omg_{0}}{r^{2}}\bigg\|_{L^{1}(\bbR^{4})}^{\frac{1}{2}}\bigg\|\frac{\omg_{0}}{r^{2}}\bigg\|_{L^{\ift}(\bbR^{4})}^{\frac{1}{2}}.
		\end{split}
	\end{equation*}
	Solving the ODE gives us \eqref{eq_Rtgrowth}.
\end{proof}

\subsection{Proof of Theorem \ref{thm:2}}

\begin{proof}[Proof of Theorem \ref{thm:2}]
	Using \eqref{eq_urest} and denoting $S(t) = \sup_{\tau \in [0,t]} \sup\lbrace r : (r,z)\in\supp \omg(\tau)\rbrace $, we estimate \begin{equation*}
		\begin{split}
			\nrm{ r^{d-2}{\omg} }_{L^1(\bbR^d)} \lesssim S(t)^{d-3} \nrm{r\omg}_{L^1(\bbR^d)}
		\end{split}
	\end{equation*} to get \begin{equation*}
		\begin{split}
			\nrm{ u^r}_{L^\infty(\bbR^d)} \lesssim S(t)^{\frac{d-3}{4}} \nrm{ r \omg}_{L^{1}(\bbR^d)}^{\frac14}\bigg\|\frac{\omg}{r^{d-2}}\bigg\|_{L^1(\bbR^d)}^{\frac14}  \bigg\|\frac{\omg}{r^{d-2}}\bigg\|_{L^\infty(\bbR^d)}^{\frac12} \lesssim S(t)^{\frac{d-3}{4}},
		\end{split}
	\end{equation*} where we have used that  $\nrm{ r \omg}_{L^{1}(\bbR^d)}$ is conserved in time when $\omg$ is of single sign. Then, \begin{equation*}
	\begin{split}
		S'(t) \le \nrm{u^r}_{L^\infty(\bbR^d)} \lesssim  S(t)^{\frac{d-3}{4}}
	\end{split}
\end{equation*} shows that $S(t) \lesssim (C + t)^{\frac{4}{7-d}}$ for $d \le 6$ and $S(t) \lesssim e^{Ct}$ for $d = 7$. Finally, for $d\le 7$, we obtain that $\omg(t)$ is bounded: \begin{equation*}
\begin{split}
	\nrm{\omg(t)}_{L^\infty(\bbR^d)} \lesssim  \bigg\|\frac{\omg_0}{r^{d-2}}\bigg\|_{L^\infty(\bbR^d)}  S(t)^{d-2},
\end{split}
\end{equation*} which gives global regularity by the BKM criterion. 
\end{proof}

\subsection*{Acknowledgments}
 KC has been supported by the National Research Foundation of Korea (NRF-2018R1D1A1B07043065, 2022R1A4A1032094).
IJ has been supported  by the New Faculty Startup Fund from Seoul National University and the Samsung Science and Technology Foundation under Project Number SSTF-BA2002-04.

\bibliographystyle{plain}
\bibliography{CJL}

\end{document}